\newcommand{\NN}{\mathbb{N}}
\newcommand{\QQ}{\mathbb{Q}}
\newcommand{\ZZ}{\mathbb{Z}}
\theoremstyle{plain}
\newtheorem{thm}{Theorem}
\theoremstyle{definition}
\newtheorem{defn}[thm]{Definition}
\theoremstyle{remark}
\newtheorem{rem}[thm]{Remark}
\numberwithin{equation}{section}
\numberwithin{thm}{section}
\begin{document}

\title{$N$-Division Points of Hypocycloids} \label{hypocycloids}

\author{Nitya Mani}
\address{The Harker School}
\email{15nityam@students.harker.org}

\author{Simon Rubinstein-Salzedo}
\address{Department of Statistics, 390 Serra Mall, Stanford University, Stanford, CA 94305}
\email{simonr@stanford.edu}
\date{\today}

\begin{abstract} 

We show that the $n$-division points of all rational hypocycloids are constructible with an unmarked straightedge and compass for all integers $n$, given a pre-drawn hypocycloid. We also consider the question of constructibility of $n$-division points of hypocycloids without a pre-drawn hypocycloid in the case of a tricuspoid, concluding that only the $1$, $2$, $3$, and $6$-division points of a tricuspoid are constructible in this manner.
\end{abstract}

\maketitle

\section{Introduction} \label{Introduction}

One of the oldest classes of problems in mathematics is concerned with straightedge and compass constructions. Most famous in this collection are the three classical Greek geometrical challenges: trisecting an angle, squaring a circle, and duplicating the cube (i.e.\ constructing a segment of length $\sqrt[3]{2}$) with an unmarked straightedge and compass, all of which have been shown to be impossible. The first and third of these are elementary exercises in field theory, while the second requires the nontrivial fact that $\pi$ is transcendental (or at least some similarly nontrivial information about $\pi$).

However, straightedge and compass constructions have by no means been completely resolved by the work of the ancient Greeks or later mathematicians. One of the most interesting straightedge and compass problems asks which regular polygons can be constructed with the aforementioned two tools. In 1796, Carl Friedrich Gauss demonstrated that the $17$-gon was constructible with straightedge and compass, leading him in 1801 to arrive at a general, sufficient condition for polygon constructibility. Pierre Wantzel showed the necessity of this condition in 1837, leading to the final statement of the Gauss-Wantzel Theorem.

\begin{thm}[Gauss-Wantzel]\label{gw} A regular $n$-gon can be constructed with a straightedge and compass if and only if $n$ is of the form \begin{equation} n = 2^m p_1 \ldots p_s \label{constructible} \end{equation} for distinct Fermat primes $p_i.$ (A Fermat prime is a prime $p$ of the form $p = 2^{2^a} + 1,\, a \in \NN$.) \end{thm}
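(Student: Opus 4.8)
The plan is to reduce the geometric problem to algebra via the standard dictionary between straightedge-and-compass constructions and towers of quadratic field extensions. First I would recall that a complex number $\alpha$ is constructible if and only if it lies in a subfield $K \subseteq \CC$ reached from $\QQ$ by a finite chain $\QQ = K_0 \subseteq K_1 \subseteq \cdots \subseteq K_r = K$ with each $[K_{i+1}:K_i] = 2$; in particular constructibility forces $[\QQ(\alpha):\QQ]$ to be a power of $2$. Constructing a regular $n$-gon is equivalent to constructing the primitive $n$-th root of unity $\zeta_n = e^{2\pi i/n}$ (from the $n$-gon one recovers $\zeta_n$, and conversely), so the theorem reduces to deciding when $\zeta_n$ is constructible.

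Next I would invoke the theory of cyclotomic fields: the extension $\QQ(\zeta_n)/\QQ$ is Galois with $[\QQ(\zeta_n):\QQ] = \varphi(n)$ and abelian Galois group isomorphic to $(\ZZ/n\ZZ)^\times$. The necessity direction then follows immediately, since if $\zeta_n$ is constructible then $\varphi(n) = [\QQ(\zeta_n):\QQ]$ must be a power of $2$. For sufficiency, suppose $\varphi(n)$ is a power of $2$. Then $G = \Gal(\QQ(\zeta_n)/\QQ)$ is a finite $2$-group, so it admits a chain of subgroups $G = G_0 \supseteq G_1 \supseteq \cdots \supseteq G_r = \{1\}$ with each index $[G_i : G_{i+1}] = 2$, a standard fact for $p$-groups. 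Applying the Galois correspondence to this chain produces a tower of fields $\QQ = K_0 \subseteq \cdots \subseteq K_r = \QQ(\zeta_n)$ with every step quadratic, witnessing that $\zeta_n$ is constructible.

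It remains to translate ``$\varphi(n)$ is a power of $2$'' into the arithmetic condition \eqref{constructible}. Writing $n = 2^{a_0} q_1^{a_1} \cdots q_t^{a_t}$ with distinct odd primes $q_j$, multiplicativity of $\varphi$ gives $\varphi(n) = 2^{\max(a_0-1,0)} \prod_j q_j^{a_j-1}(q_j-1)$. For this to be a power of $2$, each factor $q_j^{a_j-1}$ must be odd and a power of $2$, forcing $a_j = 1$, and each $q_j - 1$ must be a power of $2$. Finally I would show that a prime $q$ with $q - 1 = 2^k$ is necessarily a Fermat prime: if $k$ had an odd divisor $d > 1$, then $2^{k/d} + 1$ would be a nontrivial factor of $2^k + 1 = q$, so $k$ must itself be a power of $2$. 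This yields exactly $n = 2^m p_1 \cdots p_s$ with distinct Fermat primes $p_i$, establishing both directions.

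The step I expect to be the main obstacle is the sufficiency half, since having degree a power of $2$ is \emph{not} by itself enough for constructibility of a general algebraic number; the argument genuinely relies on $\QQ(\zeta_n)/\QQ$ being Galois, so that the subgroup chain of the $2$-group $G$ can be pulled back through the Galois correspondence to an honest tower of quadratic extensions. I would therefore take care to state the constructibility criterion in its Galois-theoretic form and to justify the existence of the index-$2$ subgroup chain inside a finite $2$-group.
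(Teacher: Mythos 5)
The paper states the Gauss--Wantzel theorem as classical background and gives no proof of its own, so there is nothing to compare against; judged on its own, your argument is the standard and correct one. You reduce to the constructibility of $\zeta_n$, use $[\QQ(\zeta_n):\QQ]=\varphi(n)$ for necessity and the Galois correspondence applied to a chain of index-$2$ subgroups of the $2$-group $(\ZZ/n\ZZ)^\times$ for sufficiency, and correctly translate ``$\varphi(n)$ is a power of $2$'' into the Fermat-prime factorization; you also rightly flag that the Galois hypothesis is what makes ``degree a power of $2$'' suffice here. No gaps.
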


A reinterpretation of the Gauss-Wantzel Theorem is that it is possible to divide a circle into $n$ arcs of equal length with a straightedge and compass if and only if $n$ is of the form (\ref{constructible}). It is natural to ask the corresponding question for other closed curves.  

%This paper considers a generalization of the idea of regular polygon constructibility to consider one question in an interesting and relatively unexplored collection of problems. The $n$ vertices of a regular $n$-sided polygon divide the circumscribed circle of the polygon into pieces of equal arc length. Thus, these $n$ points can be considered the $n$-division points of the circle. We can generalize this idea from the circle to other closed curves, considering the $n$ points that divide any closed curve into pieces of equal arc length.

\begin{defn} The $n$-division points of a closed curve $C$ are a set of $n$ points on $C$ that divide $C$ into pieces of equal arclength. \end{defn}

%This definition allows us to expand our line of questioning from the $n$-division points of the circle to those of any closed curve $C$.  Examining the broader field of $n$-division point constructibility, we find one other major theorem, Abel's Theorem on the Lemniscate  (characterizing the $n$-division points of the lemniscate). The lemniscate, also known as the lemniscate of Bernoulli, is a figure eight shaped curve whose polar radius $r$ can be expressed in terms of the arc length $s$ as $r=\int_0^s \frac{dt}{\sqrt{1-t^4}}.$

There is one other classical theorem similar to the Gauss-Wantzel Theorem: Abel's Theorem on the Lemniscate.

\begin{thm}[Abel]\label{atl} The $n$-division points of the lemniscate defined by $(x^2+y^2)^2=x^2-y^2$ can be constructed with a straightedge and compass if and only if $n$ is of the form \[n = 2^m p_1 \ldots p_s\] for distinct Fermat primes $p_i.$ \end{thm}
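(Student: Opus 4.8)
The plan is to reduce this geometric statement to a field-theoretic one via the lemniscatic sine function, and then to the arithmetic of the Gaussian integers $\ZZ[i]$, in exact parallel with the cyclotomic proof of Gauss--Wantzel (Theorem \ref{gw}). First I would parametrize the lemniscate by arc length: in polar coordinates $(x^2+y^2)^2 = x^2-y^2$ becomes $r^2 = \cos 2\theta$, and a direct computation gives the arc-length element $ds = dr/\sqrt{1-r^4}$. Writing $s = \int_0^r dt/\sqrt{1-t^4}$ and inverting defines the lemniscatic sine $\mathrm{sl}(s)$, with real period $2\varpi$ where $\varpi = 2\int_0^1 dt/\sqrt{1-t^4}$. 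The $n$-division points are then the points at arc-length parameters proportional to $\varpi/n$, i.e.\ the values $\mathrm{sl}(2k\varpi/n)$ together with their corresponding angles, so that constructibility of the division points is equivalent to constructibility of these real numbers.

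The key structural input is that $\mathrm{sl}$ is an elliptic function whose period lattice is a scalar multiple of $\ZZ[i]$, so the associated elliptic curve has complex multiplication by $\ZZ[i]$. Concretely $\mathrm{sl}(is) = i\,\mathrm{sl}(s)$, and more generally for every $\beta \in \ZZ[i]$ the value $\mathrm{sl}(\beta s)$ is a rational function of $\mathrm{sl}(s)$ with coefficients in $\ZZ[i]$ (the lemniscatic multiplication formulas, the analog of the cyclotomic identity $\zeta \mapsto \zeta^k$). Under this structure the $n$-division values form a free rank-one $\ZZ[i]/n\ZZ[i]$-module, namely the torsion module $\tfrac1n\Lambda/\Lambda$. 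I would then invoke the theory of complex multiplication for $\QQ(i)$: the field $K_n$ generated over $\QQ(i)$ by the $n$-division values is an abelian extension whose Galois group is described by $(\ZZ[i]/n\ZZ[i])^\times$ modulo the unit group $\{\pm1,\pm i\}$. This is the lemniscatic analog of the fact that $\QQ(\zeta_n)/\QQ$ is abelian with group $(\ZZ/n\ZZ)^\times$, and establishing it rigorously --- pinning down the exact period lattice, the field of definition, and the Galois action so that there are no unexpected drops in degree --- is the main obstacle of the proof.

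Granting this, the rest is the constructibility dictionary together with a finite computation. A real number is straightedge-and-compass constructible iff it lies in a tower of quadratic extensions of $\QQ$, equivalently iff the Galois closure of the field it generates has degree a power of $2$. Since $K_n/\QQ(i)$ is abelian and $[\QQ(i):\QQ]=2$, the division points are all constructible iff $[K_n:\QQ(i)]$ is a power of $2$. Because this degree is governed by $(\ZZ[i]/n\ZZ[i])^\times$ up to the unit group $\{\pm1,\pm i\}$ of order $4$, and $4$ is itself a power of $2$, the degree is a power of $2$ iff $|(\ZZ[i]/n\ZZ[i])^\times|$ is. In the forward direction an abelian $2$-group admits a composition series whose quotients are all $\ZZ/2\ZZ$, giving a chain of quadratic steps and hence constructibility; in the reverse direction any odd prime dividing the order produces an element of odd order in the Galois group, and thus a subfield of odd degree, which cannot be constructible. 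One must also confirm that the division points are genuinely real, so that real constructibility is the correct notion, and track the coordinates $x = r\cos\theta$, $y = r\sin\theta$ back into $K_n$.

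It remains to determine when $|(\ZZ[i]/n\ZZ[i])^\times|$ is a power of $2$, which I would do by factoring $n$ in $\ZZ[i]$ and applying the Chinese Remainder Theorem. The ramified prime $2 = -i(1+i)^2$ contributes a power of $2$ for every exponent. A split prime $p \equiv 1 \pmod 4$ occurring to the power $b$ contributes $p^{2(b-1)}(p-1)^2$, which is a $2$-power iff $b=1$ and $p-1$ is a power of $2$, i.e.\ $p$ is a Fermat prime. An inert prime $q \equiv 3 \pmod 4$ to the power $c$ contributes $q^{2(c-1)}(q^2-1)$, which is a $2$-power iff $c=1$ and both $q-1$ and $q+1$ are powers of $2$; the only such $q$ is $3$, which is itself the Fermat prime $2^{2^0}+1$. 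Collecting these cases, $|(\ZZ[i]/n\ZZ[i])^\times|$ is a power of $2$ precisely when $n = 2^m p_1\cdots p_s$ for distinct Fermat primes $p_i$, which is exactly the asserted condition. The main work is thus concentrated in the complex-multiplication step; the closing number-theoretic computation is then routine.
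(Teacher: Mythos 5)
The paper does not actually prove this statement: Theorem~\ref{atl} is quoted as classical background, with the proof deferred to the expositions of Cox (and Cox--Shurman) cited there. So there is no internal proof to compare against; I can only assess your sketch on its own terms. Your outline is the standard and correct route --- it is essentially the argument in the very references the paper points to: parametrize by arc length to get $ds = dr/\sqrt{1-r^4}$, invert to define $\mathrm{sl}$, use complex multiplication by $\ZZ[i]$ to make the $n$-division values into a $\ZZ[i]/n\ZZ[i]$-module, identify the Galois group of the division field over $\QQ(i)$ inside $(\ZZ[i]/n\ZZ[i])^\times$, and finish with the (correct) local computation of when that group has $2$-power order, including the inert prime $3$ as the one case where $q^2-1$ is a $2$-power. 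The one substantive caveat is the point you already flag: the embedding of the Galois group into $(\ZZ[i]/n\ZZ[i])^\times$ gives only the \emph{upper} bound on degrees, hence only the ``if'' direction; the ``only if'' direction requires a matching \emph{lower} bound, i.e.\ that the image is large enough (equivalently, irreducibility of the lemniscatic division polynomials, proved in Rosen's and Cox's treatments by an Eisenstein-type argument at primes of $\ZZ[i]$). As written, that half of the theorem is still a black box, so your text is a correct road map rather than a proof; also, your closing claim that an odd-order element of the Galois group ``produces a subfield of odd degree, which cannot be constructible'' should be rephrased as: if every division value were constructible, the compositum $K_n$ would have $2$-power degree over $\QQ$, contradicting the odd prime dividing $[K_n:\QQ]$.
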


Detailed expositions of this theorem can be found in~\cite{COX12} and~\cite{COX14}.

Surprisingly, Theorem~\ref{atl} arrives at the same closed-form expression~(\ref{constructible}) for the constructible $n$-division points of the lemniscate as Gauss and Wantzel did for the constructible regular $n$-gons.

More recently, Cox and Shurman in~\cite{COX05} have investigated the $n$-division points of other closed curves, including two members of a class of clover-like curves (the cardioid and a specific three-leaved clover) that include the lemniscate and circle. We continue the study of $n$-division points on curves by investigating the class of rational hypocycloids, defined in \S\ref{Hypocycloid}.

\begin{rem}\label{remark1} For curves other than the circle, there are actually \emph{two} possible problems regarding the constructibility of $n$-division points. First, for a closed curve $C$, we can ask whether, given the curve $C$ already drawn on the plane, it is possible to construct the $n$-division points using a compass and a straightedge. However, we can also ask whether it is possible to construct the $n$-division points of $C$ \emph{without} $C$ ever being drawn. To us, the former question feels more natural: it seems like a cruel prank to construct the $n$-division points of $C$ without ever getting to see them proudly displayed on the curve to which they rightfully belong. However, we will also consider the second problem where we do not have a pre-drawn figure, in the case of the tricuspoid. We will consider the first of these questions in \S\ref{Tricuspoid} and the second in \S\ref{notricuspoid}. Abel's Theorem is an answer to a problem of the second type. To the best of our knowledge, the corresponding question with a drawn lemniscate has not yet been answered. \end{rem}

%Note that in the below proof, we have assumed that our constructions include not only a straightedge and compass, but also the figure in question to find the $n$-division points of. In other words, we have assumed that we have a $c$-cusped hypocycloid already drawn, and we can perform straightedge compass constructions given this figure (such as intersecting circles or lines with the figure). However, in another case, we might assume that we are beginning without the figure already drawn, and we may want to construct points that $n$-sect the curve in question (without having the curve already drawn for us). This would necessitate construction of both rectangular coordinates or both polar coordinates of the $n$-division points, as we cannot intersect lines or circles with a previously drawn figure that cannot be otherwise constructed. Such a problem is probably solvable, but would potentially have a different solution. \end{rem}

\section{The $a/b$ Hypocycloid} \label{Hypocycloid}

The hypocycloid is a curve obtained by rolling a circle of radius $b$ inside the circumference of a larger circle of radius $a$ and tracing the path of a fixed point $P$ on the smaller circle of radius $b$ (a visual depiction of this rolling can be found in Figure~\ref{fig:tricuspoid} for the tricuspoid where $a/b=3$). %These curves can be generally represented through parametric equations, representing the $x$ and $y$ coordinates in terms of two parametric variables. Here, $\theta$ represents the angle about which $P$ rotates around the center of the small circle, and $\phi$ represents the angle between the center of the small and large circles (all angles are considered with respect to the $x$ axis).
These curves can be represented in parametric form as \begin{align*} x(\phi) &= (a-b)\cos\phi+b\cos{\left(\frac{a-b}{b}\phi \right)}, \\ y(\phi) &= (a-b)\sin\phi-b\sin{\left(\frac{a-b}{b}\phi \right)}.\end{align*} Here, $\phi$ denotes the angle between the positive $x$-axis and the segment connecting the centers of the two circles, as in shown in Figure~\ref{fig:phi}.

\begin{figure}
\iffalse
\begin{tikzpicture}
  \tkzInit[xmin=-3,xmax=3,ymin=-3,ymax=3]
  \tkzFctPar[ultra thick, samples=400,domain=0:2*pi]{2*cos(t)+cos(2*t)}{2*sin(t)-sin(2*t)};
  \draw[dashed] (0,0) circle (3);
  \draw (2,0) circle (1);
  %\draw (2,0) -- (3,0);
  %\fill (2,0) circle (3pt);
  \fill (0,0) circle (3pt);
  \fill (3,0) circle (5pt);
  \path node at (3.4,0) {$P$};
\end{tikzpicture}
\fi
\includegraphics{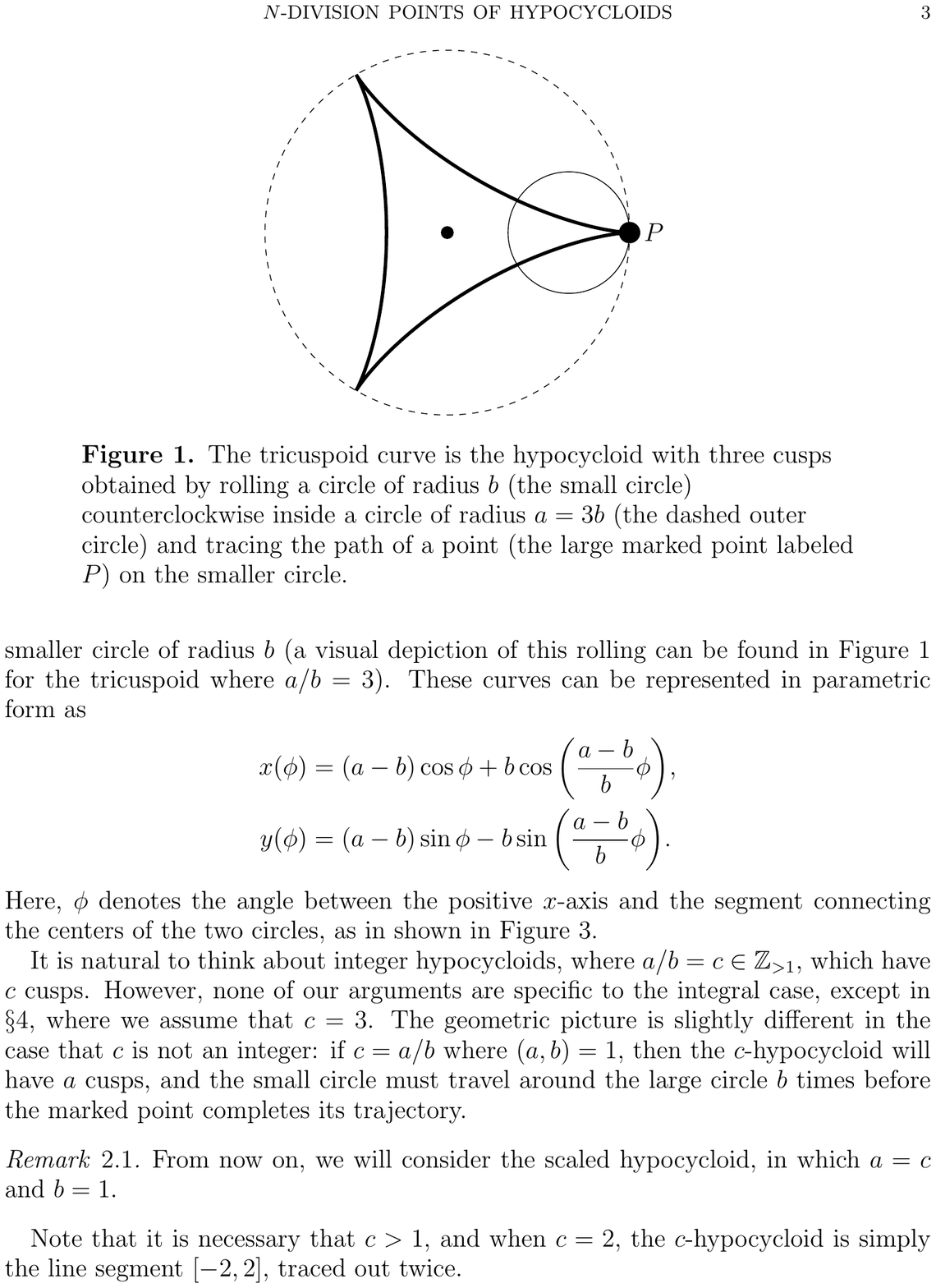}
\caption{The tricuspoid curve is the hypocycloid with three cusps obtained by rolling a circle of radius $b$ (the small circle) counterclockwise inside a circle of radius $a=3b$ (the dashed outer circle) and tracing the path of a point (the large marked point labeled $P$) on the smaller circle.}
\label{fig:tricuspoid}
\end{figure}

%\begin{align*} x(\theta, \phi) &= (a-b)\cos \phi + b \cos \theta, \\  y(\theta, \phi) &= (a-b)\sin \phi -b \sin \theta.\end{align*} We can simplify the above equations by expressing $\theta$ in terms of $\phi$ as $\theta = \frac{a-b}{b}\phi$. This results in the following parametric equations that we will use when considering specific hypocycloids: \begin{align*} x(\phi) &= (a-b)\cos\phi+b\cos{\left(\frac{a-b}{b}\phi \right)}, \\ y(\phi) &= (a-b)\sin\phi-b\sin{\left(\frac{a-b}{b}\phi \right)}.\end{align*}

It is natural to think about integer hypocycloids, where $a/b = c \in \ZZ_{>1}$, which have $c$ cusps. However, none of our arguments are specific to the integral case, except in \S\ref{notricuspoid}, where we assume that $c=3$. The geometric picture is slightly different in the case that $c$ is not an integer: if $c=a/b$ where $(a,b)=1$, then the $c$-hypocycloid will have $a$ cusps, and the small circle must travel around the large circle $b$ times before the marked point completes its trajectory.

\begin{rem} From now on, we will consider the scaled hypocycloid, in which $a = c$ and $b = 1$. \end{rem}

Note that it is necessary that $c>1$, and when $c=2$, the $c$-hypocycloid is simply the line segment $[-2,2]$, traced out twice.

We characterize the constructible $n$-division points for all hypocycloids where $a/b = c \in \QQ$. Of particular mathematical importance among this family of hypocycloids are the hypocycloids with $c=3$ and $4$, called respectively the tricuspoid (also known as the deltoid) and the astroid. The tricuspoid arises naturally in many areas of mathematics, and it was studied in depth by Euler and Steiner. For some more recent appearances, see~\cite{DUN09} and~\cite{SB12}.
%For example, the set of complex eigenvalues of unistochastic matrices of order $3$ forms a tricuspoid, and the set of all of the Simson lines of a given triangle forms a tricuspoid-shaped envelope (called the Steiner deltoid) \cite{DUN09}. 
The astroid is also mathematically noteworthy in its relation to the tricuspoid, and because it is the envelope of a set of segments of constant length whose ends can be found on mutually perpendicular straight lines (see~\cite{HYPWolfram}). Several examples of hypocycloids are illustrated in Figure~\ref{fig:Hypocycloid}).

\begin{figure}
\includegraphics{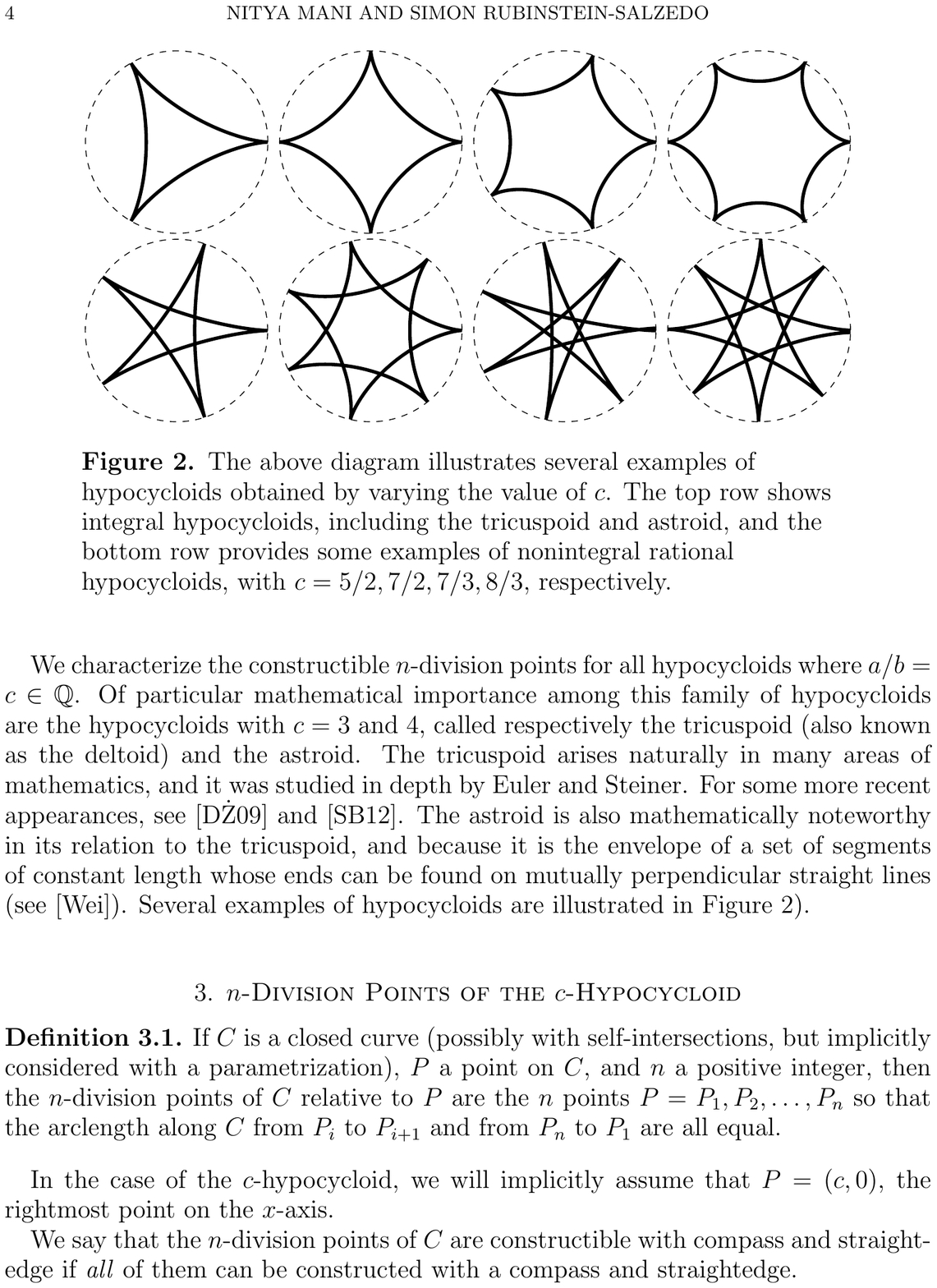}
\caption{The above diagram illustrates several examples of hypocycloids obtained by varying the value of $c$. The top row shows integral hypocycloids, including the tricuspoid and astroid, and the bottom row provides some examples of nonintegral rational hypocycloids, with $c=5/2,7/2,7/3,8/3$, respectively.}
\label{fig:Hypocycloid}
\end{figure}

\section{$n$-Division Points of the $c$-Hypocycloid} \label{Tricuspoid}

\begin{defn} If $C$ is a closed curve (possibly with self-intersections, but implicitly considered with a parametrization), $P$ a point on $C$, and $n$ a positive integer, then the $n$-division points of $C$ relative to $P$ are the $n$ points $P=P_1,P_2,\ldots,P_n$ so that the arclength along $C$ from $P_i$ to $P_{i+1}$ and from $P_n$ to $P_1$ are all equal. \end{defn}

In the case of the $c$-hypocycloid, we will implicitly assume that $P=(c,0)$, the rightmost point on the $x$-axis.

We say that the $n$-division points of $C$ are constructible with compass and straightedge if \emph{all} of them can be constructed with a compass and straightedge.

\begin{thm} For all rational $c>1$ and positive integers $n$, the $n$-division points of the $c$-hypocycloid are constructible with a straightedge and compass when a $c$-hypocycloid has been drawn in advance. \end{thm}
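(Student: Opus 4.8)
The plan is to reduce everything to intersecting the drawn curve with explicitly constructible circles centred at the centre of the hypocycloid. First I would compute the arclength element. Differentiating the parametrization and collapsing the cross terms with the angle-addition formula gives
\[
\left(\frac{dx}{d\phi}\right)^2 + \left(\frac{dy}{d\phi}\right)^2 = (c-1)^2\bigl(2 - 2\cos(c\phi)\bigr) = 4(c-1)^2\sin^2\!\left(\frac{c\phi}{2}\right),
\]
so that $ds = 2(c-1)\left|\sin(c\phi/2)\right|\,d\phi$. The cusps occur exactly where this vanishes, namely at $\phi = 2k\pi/c$, and between consecutive cusps $\sin(c\phi/2)$ keeps a constant sign. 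Writing $c = a/b$ in lowest terms, the curve has $a$ cusps and hence $a$ congruent arcs; integrating over one arc shows that the arclength measured from a cusp is an affine function of $\cos(c\phi/2)$, and that each arc has the same length $\ell$, with total length $L = a\ell$.

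The conceptual key is the following rationality statement. The $n$-division points sit at arclengths $s_k = kL/n$, and since arclength is affine in $\cos(c\phi/2)$ on each arc, at the $k$-th division point $\phi_k$ we may solve for $\cos(c\phi_k/2)$ explicitly; as $s_k/\ell = ka/n \in \QQ$ and $c \in \QQ$, the quantity $\cos^2(c\phi_k/2)$ is a \emph{rational number}. A direct computation then gives
\[
x(\phi)^2 + y(\phi)^2 = (c-1)^2 + 1 + 2(c-1)\cos(c\phi),
\]
and the double-angle identity $\cos(c\phi) = 2\cos^2(c\phi/2) - 1$ shows that $x(\phi_k)^2 + y(\phi_k)^2$ is rational. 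Hence the distance $R_k$ from the centre of the hypocycloid to the $k$-th division point is the square root of a nonnegative rational, and is therefore constructible with straightedge and compass. Note that the angle $\phi_k$ itself is generally \emph{not} a constructible angle, which is precisely why the drawn curve is needed.

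With this in hand the construction is immediate. First I would recover the centre: the cusps are visible marked points on the drawn curve lying on the circumscribed circle of radius $c$, so three of them determine the centre by perpendicular bisectors. For each $k$, I then construct the circle of radius $R_k$ about the centre and intersect it with the drawn hypocycloid; the $k$-th division point is one of these intersection points. This is exactly where the hypothesis that the curve is drawn in advance enters: the rational datum $R_k^2$ is what the straightedge and compass produce, while the point is pinned to the curve only by intersection.

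The main obstacle I expect is disambiguation: the circle of radius $R_k$ meets the curve in up to $2a$ points, since $\phi \mapsto x^2+y^2$ is two-to-one on each arc and the curve has $a$-fold symmetry. To select the correct intersection I would use the already computed value of $\cos(c\phi_k/2)$, whose sign and magnitude determine which arc $\phi_k$ lies on and which half of that arc. The cusps (at distance $c$, where $\cos(c\phi) = 1$) and the arc midpoints (at distance $|c-2|$, where $\cos(c\phi) = -1$ and the relevant circle meets each arc exactly once) are themselves constructible as intersections of explicit circles with the drawn curve, so each arc and each half-arc is a geometrically identifiable region and the right intersection point can be isolated. Carrying out this bookkeeping, and handling the degenerate cases in which a division point coincides with a cusp or an arc midpoint, is the routine but fiddly remainder of the argument.
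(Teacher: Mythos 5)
Your proposal is correct and follows essentially the same route as the paper: compute the arclength as a function of the tangential angle, invert it to show that the squared polar radius of each division point is rational, and then intersect a constructible circle about the centre with the pre-drawn curve. You are somewhat more careful than the paper about the points beyond the first cusp, the choice among the multiple circle--curve intersections, and the recovery of the centre, but the underlying argument is the same.
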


\begin{proof}
The $c$-cusped hypocycloid can be parametrized in terms of the tangential angle $\phi$ with respect to the $x$ axis, as shown in Figure~\ref{fig:phi}, to arrive at the following rectangular parametric equations: \begin{align*} x(\phi) &= (c-1)\cos{\phi}+\cos{((c-1)\phi)}, \\ y(\phi) &= (c-1)\sin{\phi}-\sin{((c-1)\phi)}.\end{align*}

\begin{figure}
\iffalse
\begin{tikzpicture}
  \tkzInit[xmin=-3,xmax=3,ymin=-3,ymax=3]
  \tkzFctPar[thick, samples=400,domain=0:2*pi]{2*cos(t)+cos(2*t)}{2*sin(t)-sin(2*t)};
  \draw[dashed] (0,0) circle (3);
  \draw (1,1.73205) circle (1);
  %\draw (2,0) -- (3,0);
  %\fill (2,0) circle (3pt);
  \fill (0,0) circle (3pt);
  \fill (3,0) circle (3pt);
  \fill (0.5, 0.866) circle (3pt);
  \fill (1,1.73205) circle (3pt);
  \draw[thick] (0, 0) -- (3, 0);
  \draw[thick] (0, 0) -- (0.5, 0.866);
  \path node at (0.8, 1.066) {$P$};
  \path node at (0.45, 0.25) {$\phi$};
\end{tikzpicture}
\fi
\includegraphics{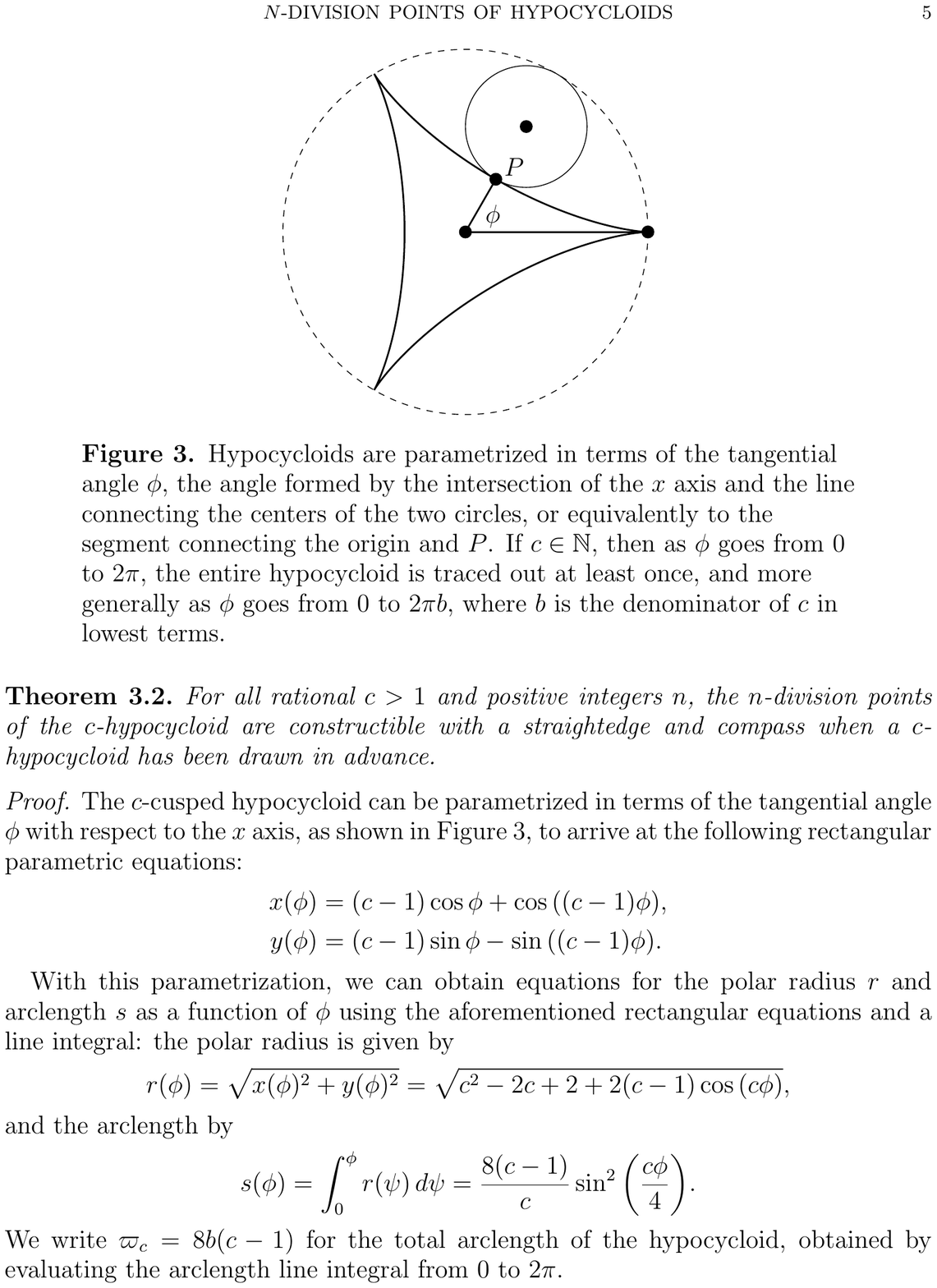}
\caption{Hypocycloids are parametrized in terms of the tangential angle $\phi$, the angle formed by the intersection of the $x$ axis and the line connecting the centers of the two circles, or equivalently to the segment connecting the origin and $P$. If $c\in\NN$, then as $\phi$ goes from $0$ to $2\pi$, the entire hypocycloid is traced out at least once, and more generally as $\phi$ goes from $0$ to $2\pi b$, where $b$ is the denominator of $c$ in lowest terms.}
\label{fig:phi}
\end{figure}

With this parametrization, we can obtain equations for the polar radius $r$ and arclength $s$ as a function of $\phi$ using the aforementioned rectangular equations and a line integral: the polar radius is given by \[r(\phi) = \sqrt{x(\phi)^2+y(\phi)^2}=\sqrt{c^2-2c+2 + 2(c-1)\cos{(c\phi)}},\] and the arclength by \[s(\phi) = \int_0^\phi r(\psi)\, d\psi = \frac{8(c-1)}{c}\sin ^2{\left(\frac{c\phi}{4}\right)}.\] We write $\varpi_c = 8b(c-1)$ for the total arclength of the hypocycloid, obtained by evaluating the arclength line integral from $0$ to $2\pi$.

Using this information, we can express the polar radius $r$ in terms of the arclength $s$, assuming that the tangential angle $\phi$ lies in the range $[0,2\pi/c]$ (or, the piece of the hypocycloid up to the first cusp): \begin{equation} \label{polarradius} r =\sqrt{c^2-2c+2 + 2(c-1)\cos{\left(2\cos ^{-1}{\left(1+\frac{cs}{4-4c}\right)}\right)}}.\end{equation}

This given expression can be used to determine the radius of a circle that, when intersected with a drawn $c$-cusped hypocycloid, creates an arc on the hypocycloid of some length $\frac{\varpi_c}{n}$, where $\varpi_c$ is the total arclength of the $c$-hypocycloid. That is, we input some arclength $s=\frac{\varpi_c}{n}$ into (\ref{polarradius}). We obtain a polar radius $r$, and when we construct a circle centered at the origin of radius $r$, one of its intersections with the $c$-cusped hypocycloid in question will cut off a portion of the arclength of length $s$ ($1/n^\text{th}$ of the total arclength), meaning that the point of intersection is one of the $n$-division points of the curve. While this argument only shows that the \emph{first} $n$-division point is constructible, the argument also holds if we replace $n$ above by $n/d$, which gives us the $d^\text{th}$ $n$-division point. Thus, we know that the arclengths formed by finding the $n$-division points of the $c$-cusped hypocycloid for all positive integers $n$, are always rational. %If the relationship between $r$ and $s$ means that for all $s = \frac{\varpi}{n}$, we can construct a circle of radius $r$, then the $n$-division points of any $c$-cusped hypocycloid will always be constructible. 

Simplifying our expression for polar radius $r$, we obtain \[r=\sqrt{c^2-2c+2(c-1)\left(2\left(1+\frac{cs}{4-4c} \right)^2-1 \right)}.\] We can always express the polar radius $r$ in terms of the arclength  $s$ we wish to isolate using only rational expressions and square roots, meaning that $r$ lies in a quadratic extension of $\QQ$ and is thus constructible with compass and straightedge. Hence, when we start with a $c$-cusped hypocycloid already drawn, we can construct circles of suitable radii $r$ and intersect them with the hypocycloid in order to construct points that divide the hypocycloid into $n$ pieces of equal arclength for all integers $n$. %Since the total arclength $\varpi_c$ is a constructible number, for all integer $n$, $\frac{\varpi_c}{n}$ will always be rational and thus constructible.
Thus, for all $c$, we can construct the $n$-division points of the $c$-cusped hypocycloid for all positive integers $n$. 
\end{proof}

\begin{figure}
\iffalse
\begin{tikzpicture} [scale = 1.5]
  \tkzInit[xmin=-3,xmax=3,ymin=-3,ymax=3]
  \tkzFctPar[ultra thick,samples=400,domain=0:2*pi]{2*cos(t)+cos(2*t)}{2*sin(t)-sin(2*t)};
  \draw (0,0) circle (sqrt 33/5);
  \draw[ultra thin] (-2.5,0) -- (3.25,0);
  \draw[ultra thin] (0,-3) -- (0,3);
  \fill (0.0480187, 1.14751) circle (3pt);
\end{tikzpicture}
\fi
\includegraphics{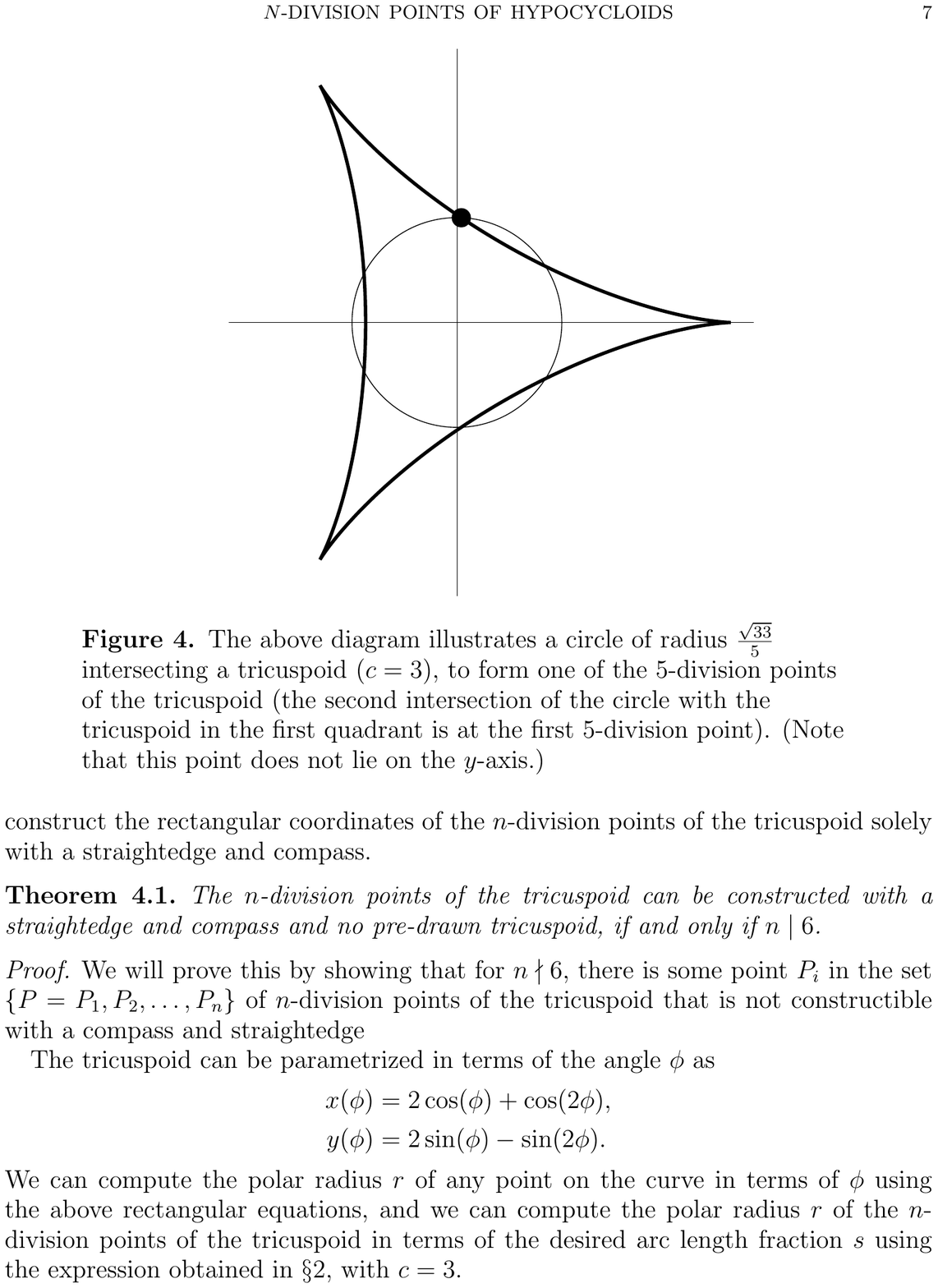}
\caption{The above diagram illustrates a circle of radius $\frac{\sqrt{33}}{5}$ intersecting a tricuspoid ($c=3$), to form one of the $5$-division points of the tricuspoid (the second intersection of the circle with the tricuspoid in the first quadrant is at the first $5$-division point). (Note that this point does not lie on the $y$-axis.)}
\end{figure}

\section{$n$-Division Points of the Tricuspoid} \label{notricuspoid}

In the previous section, we assumed that we had a rational hypocycloid to find the $n$-division points of already drawn on the page. In this section, we will consider the other problem mentioned in Remark~\ref{remark1}, namely that of constructing the $n$-division points of a hypocycloid $C$ without $C$ being drawn in advance. In doing so, we restrict to the case of a tricuspoid (hypocycloid where $a/b=c=3$). This necessitates being able to ascertain the location of the $n$-division points of the tricuspoid without intersecting lines or arcs with a tricuspoid; in other words, we need to be able to construct the rectangular coordinates of the $n$-division points of the tricuspoid solely with a straightedge and compass.

\begin{thm} The $n$-division points of the tricuspoid can be constructed with a straightedge and compass and no pre-drawn tricuspoid, if and only if $n\mid 6$. \end{thm}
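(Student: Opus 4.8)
The plan is to reduce the constructibility of each division point to the reducibility of a single cubic, and then decide arithmetically, as a function of $n$, when that cubic splits.

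First I would locate the division points within one cusp. Using the arclength formula $s(\phi) = \frac{8(c-1)}{c}\sin^2(c\phi/4)$ from the previous section, which for $c=3$ reads $s(\phi)=\frac{16}{3}\sin^2(3\phi/4)$ with total length $\varpi_3 = 16$, together with the three-fold rotational symmetry of the tricuspoid (rotation by $2\pi/3$ is constructible and advances the arclength by $\varpi_3/3$), I would show that the $k$-th division point is a constructible rotate of a point whose parameter $\phi$ satisfies $\sin^2(3\phi/4) = j/n$, where $j \equiv 3k \pmod{n}$. Writing $\rho = \cos(3\phi/2) = 1 - 2j/n \in \QQ$, the coordinates $x,y$ are polynomials in $\cos\phi,\sin\phi$, and $\cos\phi,\cos(\phi/2)$ are constructible from one another (double-angle in one direction, a square root in the other). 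Since $\cos(\phi/2)$ is a root of $4t^3 - 3t - \rho$, this is a trisection problem, and the division point is constructible if and only if this cubic is reducible over $\QQ$, i.e.\ has a rational root. As $k$ ranges over residues, the admissible values are exactly $j \in \{0, g, 2g, \dots, n-g\}$ with $g = \gcd(3,n)$.

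For the forward implication I would simply run through $n \in \{1,2,3,6\}$ and check that the only values occurring are $\rho \in \{0,1\}$, for which $4t^3-3t-\rho$ factors as $(t-1)(2t+1)^2$, respectively $t(4t^2-3)$; hence the relevant root is constructible and every division point is constructible. For the converse the key tool is an elementary lemma: if $4t^3-3t-\rho$ has a rational root, then writing $2\rho$ in lowest terms its denominator is a perfect cube. This follows from the substitution $v=2t$, giving $v^3-3v = 2\rho$, since for $v=p/q$ in lowest terms the expression $p(p^2-3q^2)/q^3$ is already reduced. For $\rho = (n-2j)/n$ the denominator of $2\rho$ equals $n/\gcd(4j,n)$, so it suffices to exhibit, for each $n \nmid 6$, an admissible $j$ making this not a cube. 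I would do so by forcing a single bad prime valuation: if a prime $p \ge 5$ divides $n$ with $v_p(n)=e$, take $j = g\,p^{e-1}$, so that the $p$-adic valuation of the denominator is $1$; and when $n = 2^a 3^b$, the $3$-adic valuation disposes of $b \ge 2$ (take $j = 3^{b-1}$) and the $2$-adic valuation disposes of $a \ge 3$ (take $j = g\,2^{a-3}$).

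The stubborn part is the two sporadic cases $n=4$ and $n=12$, which are precisely the values $n = 2^a 3^b \nmid 6$ with $a\le 2$ and $b\le 1$; here $n/\gcd(4j,n)$ is always $1$, so the cube criterion is silent. For both, however, the admissible $j$ giving $\rho = 1/2$ produces the cubic $v^3 - 3v - 1$ in $v = 2\cos(\phi/2)$, which has no rational root and is therefore irreducible. This is exactly the classical impossibility of trisecting a $60^\circ$ angle, so the obstruction I expect to be the crux of the argument turns out to reconnect the whole result to the Greek trisection problem mentioned in the introduction. Assembling the cases yields non-constructibility for every $n \nmid 6$ and hence the characterization $n \mid 6$.
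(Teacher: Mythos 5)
Your proof is correct, and it takes a genuinely different route from the paper's. The paper fixes attention on the \emph{first} $n$-division point, derives the explicit cubic $f_n(x)$ satisfied by its $x$-coordinate, and proves irreducibility for $(n,3)=1$, $n>2$ via the Newton polygon at $p=3$ (slope $-2/3$); the case $3\mid n$, $n>6$ is then handled by reducing to the $m$- or $9$-division points and checking $f_9$ by hand, and $f_3,f_6$ are checked to be reducible. You instead pass through the tangential angle: reducing each division point to the first cusp by the constructible rotation through $2\pi/3$, you land on the trisection cubic $4t^3-3t-\rho$ for $\cos(\phi/2)$ with $\rho=1-2j/n\in\QQ$, and replace the Newton polygon by the rational root theorem in the form ``the denominator of $2\rho$ must be a perfect cube,'' settling the sporadic cases $n=4,12$ by the irreducibility of $v^3-3v-1$, i.e.\ the classical non-trisectability of $60^\circ$. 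I checked the details (the admissible set of $j$, the valuation computations for $p\ge 5$, $v_3$ when $9\mid n$, $v_2$ when $8\mid n$, and the reducible factorizations for $\rho\in\{0,\pm 1\}$) and they go through; your $r^2$ and $\rho$ are consistent with the paper's $r^2=(9n^2-96n+288)/n^2$. What each approach buys: yours is more elementary, treats \emph{every} division point uniformly through the parameter $j$ rather than only the first (which also bears on the paper's closing remark about individual points), and makes the link to angle trisection explicit, anticipating the paper's remark that an angle trisector or origami would suffice. The paper's Newton polygon argument, while heavier, yields the extra arithmetic information that the cubic fields involved are totally ramified at $3$, which is the natural starting point for the generalizations suggested in its final remark. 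One cosmetic point: depending on whether you take $j\in[0,n)$ or $[0,n]$, the value $\rho=-1$ can also occur at a cusp endpoint; it is harmless since $4t^3-3t+1=(t+1)(2t-1)^2$ is reducible, but you should fix the convention.
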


\begin {proof}
We will prove this by showing that for $n\nmid 6$, there is some point $P_i$ in the set $\{P=P_1,P_2,\ldots,P_n\}$ of $n$-division points of the tricuspoid that is not constructible with a compass and straightedge

The tricuspoid can be parametrized in terms of the angle $\phi$ as \begin{align*} x(\phi) &= 2\cos(\phi)+\cos(2\phi), \\ y(\phi) &= 2\sin(\phi) - \sin(2\phi).\end{align*} We can compute the polar radius $r$ of any point on the curve in terms of $\phi$ using the above rectangular equations, and we can compute the polar radius $r$ of the $n$-division points of the tricuspoid in terms of the desired arc length fraction $s$ using the expression obtained in \S\ref{Hypocycloid}, with $c=3$. 

Equating these two expressions for the polar radius enables us to determine an expression for $\phi$ and subsequently for the $x$ coordinate; the first expression comes from evaluating the final result of the previous section when $c=3$ and the second is the parametric definition of $x$ for the tricuspoid in terms of $\phi$, the tangential angle: \[r^2=\frac{9n^2-96n+288}{n^2}=x^2+y^2=(2\cos \phi+\cos 2\phi)^2 + (2\sin \phi - \sin 2\phi)^2\] \[x=2\cos(\phi)+\cos(2\phi)\] Since $y = \sqrt{r^2-x^2}$, and the polar radius $r$ is constructible for all $n$-division points of the tricuspoid, the $x$ and $y$ coordinates of the $n$-division points of a tricuspoid are constructible if and only if the $x$ coordinate is constructible. 

Using the above equations, we find that the $x$-coordinate of the tricuspoid's first $n$-division point in terms of $n$ is a real root of the following cubic polynomial: \[f_n(x)=(2^2n^4)x^3-(3^3n^4-2^53^2n^3+2^53^3n^2)x - (3^3n^4-2^4 3^3n^3+2^4 3^2 17n^2-2^8 3^3n+2^7 3^4).\] Due to the parametrization described in \S \ref{Hypocycloid}, we require the first $n$-division point to fall within the first cusp of the tricuspoid, meaning that our polynomial is only valid for $n \geq 3$.

We will show that the $n$-division points are not constructible when $n \neq 3 \, \text{or} \, 6$ for $n \geq 3$.

Let us first consider the case where $(n,3)=1,\, n >2$. Then, we use the Newton polygon at $p=3$ to show that $f_n(x)$ is irreducible. Let us write $f_n(x)=a_3x^3+a_2x^2+a_1x+a_0$. (Here $a_2=0$.) For $a\in\QQ^\times$, let $v_3(a)$ denote the 3-adic valuation of $a$. Then \[v_3(a_0)=2,\ v_3(a_1)=2, \ v_3(a_3)=0.\] Thus, the Newton polygon, shown in Figure~\ref{newtonpoly} has a line with a slope of $-2/3$, which implies that $f_n(x)$ is irreducible.

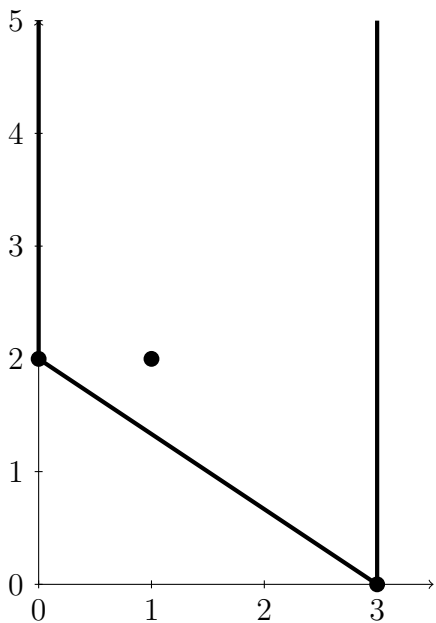
\begin{figure}
\begin{tikzpicture} [scale = 1.5]
%  \tkzInit[xmin=0,xmax=3,ymin=0,ymax=5]
  \draw[->] (0,0) -- (3.5,0);
  \draw[->] (0,0) -- (0,5);
  \foreach \x in {0,1,2,3}
    \draw (\x cm,1pt) -- (\x cm,-1pt) node[anchor=north] {$\x$};
  \foreach \y in {0,1,2,3,4,5}
    \draw (1pt,\y cm) -- (-1pt,\y cm) node[anchor=east] {$\y$};
  \draw[ultra thick] (0,5) -- (0,2);
  \draw[ultra thick] (3,5) -- (3,0);
  \draw[ultra thick] (0, 2) -- (3, 0);
  \fill (0, 2) circle (2pt);
  \fill (1, 2) circle (2pt);
  \fill (3, 0) circle (2pt);
\end{tikzpicture}
\caption{The Newton polygon for $f_n(x)$ at $p=3$, when $(n,3)=1$. As shown above, the polygon has a line with slope $-2/3$, indicating that $f_n(x)$ is irreducible.}
\label{newtonpoly}
\end{figure}

%Let us first consider the case where $n$ has prime factors other than $2$ or $3$; that is, we can write $n=pk$ for prime $p$ not equal to $2$ or $3$ and integer $k$. Then, substituting this value of $n$ into the cubic obtained above, we see that every term includes a factor of $n$ except the constant term $2^7 3^4$, meaning that if $n$ contains a prime factor $p$ other than $2$ or $3$, there can be no integer solution in $x$ as this constant term will not include a multiple of $p$ and every other term will, meaning the equation is not $0$ for rational $x$ and thus the cubic has no linear factors and is irreducible. 

We now consider the case when $n$ is a multiple of $3$ and $n>6$. Suppose that $n=3^e m$ where $(m,3)=1$. Then the $m$-division points are contained in the $n$-division points, so it suffices to show that the $m$-division points are not constructible. Since $m$ is not a multiple of $3$, we see that unless $m=1$ or 2, the $m$-division points are not constructible. In all remaining cases, $n$ is a multiple of 9, so the 9-division points are included among the $n$-division points. However, the $9$-division points of a tricuspoid are not constructible, since $f_9(x)=81(324x^3 - 459x -107)$ is irreducible.

Finally, $f_n(x)$ is, in fact, reducible for $n=3$ and 6, so the coordinates of the 3- and 6-division points lie in a quadratic extension of $\QQ$ (in fact, $\QQ(\sqrt{3})$), which shows that they are constructible.
\end{proof}

\begin{rem}
Note that since the value of $x$ is the root of a cubic with rational coefficients, were we to allow ourselves an angle trisector or origami in addition to a compass or straightedge, we would be able to construct all of the $n$-division points of the tricuspoid even without it drawn.
\end{rem}

\begin{rem}
One may be tempted to wonder which, if any, of the \emph{individual} $n$-division points are constructible when $n\nmid 6$. It appears that no $n$-division points other than those contained among the 6-division points are constructible. The $x$-coordinates of $n$-division points are roots of the polynomials $f_r(x)$, where $r$ is a rational number $\ge 3$ whose numerator divides $n$. It appears that, except in the cases already discussed, $f_r(x)$ is indeed irreducible, and further, that the field $\QQ(\alpha)$ obtained by adjoining a root of $f_r(x)$ is totally ramified at 3. However, the analysis using Newton polygons in a na\"\i ve manner as above fails, as the slope of the Newton polygon is sometimes integral. This is due to the fact that (assuming $f_r$ is indeed irreducible) $\ZZ_3[\alpha]$ is not typically the full valuation ring of the field $\QQ(\alpha)\otimes\QQ_3$. \end{rem}

%\section{Acknowledgments}

%A huge thanks to my mentor Dr.$\ $Simon Rubinstein-Salzedo for all of his help with my research project, especially with regards to teaching me a lot of the background mathematics needed to tackle these questions, verifying solutions and proofs, and offering opinions on conjectures. Additionally, a big thanks to Ho Chung Siu and Dr.$\ $Rick Sommer for their instruction in abstract algebra and constructibility as well as their broader support of my efforts to learn pure math and conduct math research. I would also like to thank Dr.$\ $David Cox for his advice about which papers to look at to gain more insight into the problems I was examining.

\bibliographystyle{alpha}
\bibliography{hypocycloid}

\end{document}